\theoremstyle{plain}
\newtheorem{theorem}{Theorem}[subsection]
\newtheorem{proposition}[theorem]{Proposition}
\newtheorem{corollary}[theorem]{Corollary}
\newtheorem{definition}[theorem]{Definition}
\newtheorem{remark}[theorem]{Remark}
\newcommand{\N}{\mathbb{N}}
\newcommand{\Z}{\mathbb{Z}}
\newcommand{\Q}{\mathbb{Q}}
\newcommand{\R}{\mathbb{R}}
\newcommand{\C}{\mathbb{C}}
\newcommand{\K}{\mathbb{K}}
\newcommand{\Pj}{\mathbb{P}}
\newcommand{\sfM}{\mathsf{M}}
\DeclareMathOperator{\GL}{{\mathsf{GL}}}
\DeclareMathOperator{\SL}{{\mathsf{SL}}}
\DeclareMathOperator{\Proj}{Proj}
\newcommand{\bond}{\mathfrak{b}}
\newcommand{\lineL}{\mathcal{L}}
\newcommand{\calC}{\mathcal{C}}
\newcommand{\pzero}{\mathsf{0}}
\newcommand{\pone}{\mathsf{1}}
\newcommand{\pfs}{\dotplus}
\DeclareMathOperator{\LS}{{\mathsf{LS}}}
\DeclareMathOperator{\lcm}{lcm}
\DeclareMathOperator{\cleq}{\preccurlyeq}
\DeclareMathOperator{\rlex}{rlex}
\DeclareMathOperator{\leqt}{\leq^t}
\DeclareMathOperator{\leqtrlex}{\leq^t_{\rlex}}
\DeclareMathOperator{\supp}{supp}
\DeclareMathOperator{\Hom}{Hom}
\DeclareMathOperator{\Tor}{Tor}
\newcommand{\myref}[2]{\hyperref[#2]{#1 \ref{#2}}}
\title{On Some Properties of LS Algebras}
\author{Rocco Chiriv\`\i}
\address{Dipartimento di Matematica e Fisica ``Ennio De Giorgi'', Universit\`a del Salento, Lecce, Italy}
\email{rocco.chirivi@unisalento.it}
\subjclass[2010]{13F50, 20G05.}
\keywords{Toric variety, Gorenstein algebra, Koszul algebra, LS path, LS algebra}
\begin{document}

\begin{abstract}
The discrete LS algebra over a totally ordered set is the homogeneous coordinate ring of an irreducible projective (normal) toric variety. We prove that this algebra is the ring of invariants of a finite abelian group containing no pseudo-reflection acting on a polynomial ring. This is used to study the Gorenstein property for LS algebras. Further we show that any LS algebra is Koszul.
\end{abstract}

\maketitle

\noindent Accepted by \emph{Communications in Contemporary Mathematics},\\
{\tt DOI:10.1142/S0219199718500852}

\subsection{Introduction} LS Algebras have been introduced in \cite{chirivi_ls} as an algebraic framework for standard monomial theory; they are a generalization of Hodge algebras as defined in \cite{dep}.

Standard monomial theory started with Hodge’s study of Grassmannians in \cite{hodge} and (with Pedoe) in \cite{hodge_pedoe}. A similar result for the space of matrices was found by Doubilet, Rota and Stein in \cite{DRS}; then this was generalized to symmetric and antisymmetric matrices by De Concini and Procesi in \cite{DP}.

A systematic program for the development of a standard monomial theory for quotients of reductive groups by parabolic subgroups was started by Seshadri in \cite{seshadri} where the case of minuscule parabolics is considered. Further, in \cite{seshadri_lakshmibai} Seshadri and Lakshmibai noted that the above mentioned results was specializations of their general theory.

This program was finally completed by Littelmann. Indeed, in \cite{littelmann_paths}, he found a combinatorial character formula for representations of symmetrizable Kac-Moody groups introducing the language of LS-paths. Moreover, in \cite{littelmann_contracting} he constructed a basis of the irreducible representations associated to LS-paths and proved that this basis defines a standard monomial theory for Schubert varieties of symmetrizable Kac-Moody groups. A more precise version of certain defining relations was proved by Littelmann in collaboration with Lakshmibai and Magyar in \cite{LLM}. At the same time the notion of LS algebras was being introduced by the author and these refined relations exactly matched with the order requirement for LS algebras.

An LS algebra is modelled on the combinatorial structure of LS-paths. Given a partially ordered set $(S,\leq)$ with certain kind of multiplicities $\bond$ called bonds verifying some mild compatibility conditions, one can define the abstract notion of an LS-path over $(S,\leq,\bond)$. An LS algebra is a graded algebra having the set of LS-paths as a vector space basis and such that the product of two LS-paths is a linear combination, called straightening relation, of LS-paths verifying certain order condition.

The above cited Littelmann's results in \cite{littelmann_contracting} and \cite{LLM} can be expressed in terms of LS algebras as follows. Let $G$ be a semisimple algebraic group, or a symmetrizable Kac-Moody group as well, let $B$ be a Borel subgroup of $G$ and $P$ a parabolic subgroup containing $B$. Denote by $X$ a Schubert variety in the  partial flag variety $G/P$ and let $\lineL$ be an ample line bundle on $G/P$. The ring $R=\oplus_{n\geq 0}H^0(X,\lineL^{\otimes n})$ is the coordinate ring of the cone over an embedding of $X$. As proved in \cite{chirivi_ls}, $R$ is an LS algebra over the poset with bonds associated with the pair $P,\lineL$. This example is the actual \emph{raison d'\^etre} of LS algebras; the case of grassmannians is covered by Hodge algebras and was the other source of inspiration.

In order to treat the multicone over a partial flag variety, a multigraded version of LS algebras has been developed in \cite{chirivi_multicone}. Further steps have been the application of this circle of ideas to symmetric varieties in \cite{equations}, to model varieties in \cite{CM_modelVarieties} and next to wonderful varieties in \cite{wonderful}. An explicit computation of the straightening relations for the spin module of orthogonal groups is presented in \cite{pfaffians}; this completes the explicit standard monomial theory for minuscule cases of classical groups.

The aim of the present paper is to continue the study of general properties of LS algebras begun in \cite{chirivi_ls}.

In particular we see in details why the simplest LS algebra over a totally ordered set with bonds $(S,\leq,\bond)$, called discrete LS algebra, is the homogeneous coordinate ring of an irreducible projective (normal) toric variety. Moreover the discrete algebra for a general poset is the homogeneous coordinate ring of a glueing of irreducible (normal) toric varieties along (normal) toric subvarieties.

Next we show that, for a totally ordered set, the discrete algebra is also the ring of invariants of a finite abelian group containing no pseudo-reflection acting on a polynomial ring. This allows a very neat criterion for Gorensteiness of LS algebras over totally ordered sets.

Finally, by translating a standard monomial theory in the language of Gr\"obner basis, we study the Koszul property for LS algebras over general poset with bonds. It turns out that any LS algebra is Koszul.

\subsection{Poset with bonds} Let $(S,\leq)$ be a finite partially ordered set, a \emph{poset} for short, with a unique minimal element $\pzero$ and a unique maximal element $\pone$; assume further that the poset is graded, i.e. all complete chains with fixed initial and final elements have the same number of elements. 

The element $\tau$ \emph{covers} the element $\sigma$ in $S$ if (1) $\sigma<\tau$ and (2) if $\tau\leq\eta\leq\sigma$ then either $\eta=\tau$ or $\eta=\sigma$. In what follows we always denote by $N$ the \emph{length} of $S$, i.e. the number os covering relations in a (hence any) complete chain from $\pzero$ to $\pone$.
\begin{definition}\label{definition_bond}
A \emph{set of bonds} on $S$ is a map $\bond$ from pairs $(\sigma,\tau)$, with $\tau$ covering $\sigma$, to positive integers such that: given two complete chains
\[
\sigma = \eta_{i,1}<\eta_{i,2}<\cdots<\eta_{i,r} = \tau,\quad i = 1,2
\]
from $\sigma$ to $\tau$ in $S$ we have
\[
\gcd_{1\leq i\leq r-1}\bond(\eta_{1,i},\eta_{1,i+1}) \,\,= \gcd_{1\leq i\leq r-1}\bond(\eta_{2,i},\eta_{2,i+1}).
\]
\end{definition}
The map $\bond$ can be extended to comparable pairs $(\sigma,\tau)$ as the greatest common divisor on a complete chain from $\sigma$ to $\tau$. For an element $\sigma\in S$, we denote by $M_\sigma$ the least common multiple of all bonds $\bond(\eta,\tau)$, with $\tau$ covering $\eta$, and either $\eta=\sigma$ or $\tau=\sigma$. Note that, by the way we have defined the bonds on comparable pairs, $M_\sigma$ is also the least common multiple of all bonds $\bond(\eta,\tau)$, with $\eta<\tau$ and either $\eta=\sigma$ or $\tau=\sigma$.

The order complex $\Delta(S)$ of the poset $(S,\leq)$ is the abstract simplicial complex having as simplexes all the chains of $S$. The facets of $\Delta(S)$ are the maximal chains of $S$ and, being $S$ graded, $\Delta(S)$ is pure dimensional (see, for example, \cite{bjorner}.)

\subsection{LS-paths} Let $(S,\leq,\bond)$ be a poset with bonds.
\begin{definition}\label{definition_LSpath}
A function $\pi:S\longrightarrow\Q$ is an \emph{LS-path} of \emph{degree} $\deg\pi = r\in\N$ if the following conditions hold:
\begin{itemize}
\item[(1)] $\pi(\sigma)\geq0$ for any $\sigma\in S$,
\item[(2)] $\supp\pi$, defined as the set of $\sigma\in S$ such that $\pi(\sigma) \neq 0$, is a totally ordered subset of $S$,
\item[(3)] if $\supp\pi=\{\sigma_1<\sigma_2<\cdots<\sigma_n\}$ then, for all $j=1,\ldots,n-1$,
\[
\bond(\sigma_j,\sigma_{j+1})\sum_{i=1}^j\pi(\sigma_i)\in\N
\] and
\[
\sum_{i=1}^n\pi(\sigma_i)=r.
\]
\end{itemize}
\end{definition}
It follows at once from the definition that $M_\sigma\pi(\sigma)$ is an integer for any LS-path $\pi$ and any $\sigma\in S$. Note that in (3) we may equivalently suppose that $\supp\pi\subseteq\{\sigma_1<\sigma_2<\cdots<\sigma_n\}$. Further the condition (3) implies that $\pi(\sigma)\in\Q$ for any $\sigma\in S$; so the conditions (1), (2) and (3) characterize the LS-paths of degree $r$ in the set of real valued functions on $S$.

We denote by $\LS_r$ the set of all LS-paths of degree $r$ and by $\LS$ the set of all LS-paths. 
Given a chain $\calC$ in $S$, we denote by $\LS(\calC)$ the set of LS-paths having support contained in $\calC$, accordingly $\LS_r(\calC)$ is the set of LS-paths of degree $r$ in $\LS(\calC)$.

The operation of addition of functions $S\longrightarrow\Q$ is denoted by $\pfs$, in order to better distinguish it from addition in other rings. We say that the LS-paths $\pi_1,\pi_2,\ldots,\pi_r$ have \emph{comparable supports} if there exists a chain in $S$ containing $\supp\pi_1\cup\cdots\cup\supp\pi_r$; i.e., there exists a chain $\calC$ such that $\pi_1,\pi_2,\ldots,\pi_r\in\LS(\calC)$. In such a case the sum $\pi_1\pfs\pi_2\pfs\cdots\pfs\pi_r$ is an LS-path of degree $\deg\pi_1 + \cdots + \deg\pi_r$. More is true since
\begin{proposition}[Proposition 3 in \cite{chirivi_ls}]
If $\pi$ is an LS-path of degree $r$ then there exist LS-paths $\pi_1,\pi_2,\cdots,\pi_r$ of degree $1$, such that: $\max\supp\pi_h\leq\min\supp\pi_{h+1}$, for all $h=1,2,\ldots,r-1$, and $\pi=\pi_1 \pfs \pi_2 \pfs \cdots \pfs \pi_r$ as functions on $S$.
\end{proposition}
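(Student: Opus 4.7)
The plan is to chop $\pi$ at the points where its cumulative mass crosses the integers $1,2,\ldots,r-1$. Write $\supp\pi=\{\sigma_1<\sigma_2<\cdots<\sigma_n\}$, set the partial sums $p_j:=\sum_{i=1}^{j}\pi(\sigma_i)$ (so $p_0=0$ and $p_n=r$) and abbreviate $b_j:=\bond(\sigma_j,\sigma_{j+1})$. The LS-path condition then reads $b_jp_j\in\N$ for $1\leq j<n$. For each $h=1,\ldots,r$ I would define $\alpha_h$ as the smallest index with $p_{\alpha_h}>h-1$ and $\beta_h$ as the smallest with $p_{\beta_h}\geq h$; note $\alpha_h\leq\beta_h$. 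Then I set
\[
\pi_h(\sigma_{\alpha_h}):=p_{\alpha_h}-(h-1),\quad \pi_h(\sigma_i):=\pi(\sigma_i)\ \text{for}\ \alpha_h<i<\beta_h,\quad \pi_h(\sigma_{\beta_h}):=h-p_{\beta_h-1},
\]
where if $\alpha_h=\beta_h$ the single value collapses to $1$, and $\pi_h=0$ elsewhere.

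The verification proceeds in four quick steps. First, a telescoping computation gives $\sum_i\pi_h(\sigma_i)=1$, all values are strictly positive on $\supp\pi_h$ by the minimality of $\alpha_h$ and $\beta_h$, and $\supp\pi_h\subseteq\supp\pi$ is automatically a chain; this handles conditions (1) and (2) and the degree. Second, for condition (3), the partial sums of $\pi_h$ at the indices $\alpha_h\leq j<\beta_h$ are exactly $p_j-(h-1)$, hence $b_j\bigl(p_j-(h-1)\bigr)=b_jp_j-b_j(h-1)\in\N$ because both terms lie in $\N$. Third, the ordering $\max\supp\pi_h\leq\min\supp\pi_{h+1}$ follows from $\beta_h\leq\alpha_{h+1}$: indeed $p_{\beta_h}\geq h$ by construction, while $\alpha_{h+1}$ is the first index with $p_{\alpha_{h+1}}>h$, so $\alpha_{h+1}\geq\beta_h$. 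Fourth, for each $i$ one checks by a short case analysis (or by telescoping) that $\sum_{h=1}^{r}\pi_h(\sigma_i)=p_i-p_{i-1}=\pi(\sigma_i)$, giving $\pi=\pi_1\pfs\cdots\pfs\pi_r$.

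The whole argument is bookkeeping and I see no real obstacle. The one conceptual point worth emphasizing is step two: the integrality assertion $b_jp_j\in\N$ for $\pi$ transfers to $\pi_h$ precisely because we subtract the integer $h-1$, and this is the reason that splitting at integer values of the cumulative mass is the correct prescription. The only mild care required is with boundary situations—when $\alpha_h=\beta_h$ (a whole unit of mass sits on a single element) or when $p_{\beta_h}=h$ exactly (so $\alpha_{h+1}=\beta_h+1$ and consecutive supports are disjoint rather than meeting at one element)—which are handled uniformly by the formulas above. A more elegant but essentially equivalent rewriting would proceed by induction on $r$, carving off $\pi_1$ and checking that $\pi-\pi_1$ is an LS-path of degree $r-1$ supported on $\{\sigma_{\beta_1},\ldots,\sigma_n\}$, but the direct construction above makes the indexing transparent.
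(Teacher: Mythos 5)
Your construction is correct and complete; the only point needing care, the collision of the two boundary formulas when $\alpha_h=\beta_h$, is handled consistently (both collapse to the unified prescription $\pi_h(\sigma_i)=\min(p_i,h)-\max(p_{i-1},h-1)$, which equals $1$ there). The paper itself gives no proof, quoting the statement as Proposition~3 of the cited reference, and your argument --- cutting the cumulative mass at the integer levels $1,\dots,r-1$ and transferring the integrality $b_jp_j\in\N$ by subtracting the integer $h-1$ --- is precisely the standard decomposition used in that source.
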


A (formal) monomial $\pi_1\pi_2\cdots\pi_r$ of LS-paths is \emph{standard} if $\max\supp\pi_h\leq\min\supp\pi_{h+1}$ for $h=1,2,\ldots,r-1$. If the LS-paths $\pi_1,\pi_2,\ldots,\pi_r$ have comparable supports then $\pi=\pi_1\pfs\cdots\pfs\pi_r$ is an LS-path and, by the previous proposition, there exist LS-paths $\pi_{0,1},\pi_{0,2},\dots,\pi_{0,r}$ of degree $1$ such that (1) $\pi=\pi_{0,1}\pfs\cdots\pfs\pi_{0,r}$ and (2) the monomial $\pi_{0,1}\cdots\pi_{0,r}$, called the \emph{canonical form} of $\pi_1\pi_2\cdots\pi_r$, is standard. We say also that $\pi_{0,1} \pfs \pi_{0,2} \pfs \cdots \pfs \pi_{0,r}$ is the canonical form of $\pi$.

The definition of LS algebra in next Section uses certain orders that we introduce here. Let $\leqt$ be a total order on $S$ refining the partial order $\leq$. The set of all functions $\pi:S\longrightarrow\Q$ can be totally ordered using the reverse lexicographic order: we define
\[
\pi\leqtrlex\pi'
\]
if and only if either $\pi=\pi'$ or, denoting by $\sigma$ the $\leqt$--maximal element with $\pi(\sigma)\neq\pi'(\sigma)$, we have $\pi(\sigma)<\pi'(\sigma)$. We define also $\pi\trianglelefteq\pi'$ if $\pi\leqtrlex\pi'$ for any total order $\leqt$ refining $\leq$ on $S$; the relation $\trianglelefteq$ is a partial order.

\subsection{LS algebras}\label{subsection_LSAlgebra} Let $\K$ be a field, let $A$ be a commutative $\K$--algebra, fix an injection $j:\LS_1\hookrightarrow A$ and extend it to $\LS$ using the canonical form for higher degree LS-paths: if $\pi=\pi_1 \pfs \pi_2 \pfs \cdots \pfs \pi_r$ is the canonical form of $\pi$, then define $j(\pi)=j(\pi_1)j(\pi_2)\cdots j(\pi_r)$. Usually the map $j$ will be omitted and we simply write $\pi\in A$ instead of $j(\pi)\in A$.
\begin{definition}\label{definition_LSalgebra}
The algebra $A$ is an \emph{LS algebra} over the poset with bonds $(S,\leq,\bond)$ if
\begin{enumerate}
\item[(LSA1)] the set of LS-paths is a basis of $A$ as a $\K$--vector space and the LS-path degree induces a grading for $A$,
\item[(LSA2)] if $\pi_1\pi_2$ is a non-standard monomial of degree $2$ and
\[
\pi_1\pi_2=\sum_j c_j\pi_{j,1}\pi_{j,2}
\]
is the unique relation, called a \emph{straightening relation}, expressing $\pi_1\pi_2$ as a $\K$--linear combination of standard monomials, as guaranteed by \textnormal{(LSA1)}, then $\pi_1 \pfs \pi_2\trianglelefteq\pi_{j,1} \pfs \pi_{j,2}$, for any $j$ such that $c_j\neq0$,
\item[(LSA3)] if $\pi_1\pi_2$ is a non-standard monomial of degree $2$ and $\pi_1$,$\pi_2$ have comparable supports then the canonical form of $\pi_1 \pfs \pi_2$ appears with coefficient $1$ in the straightening relation for $\pi_1\pi_2$.
\end{enumerate}
\end{definition}
In the sequel, if not otherwise stated, we always write a straightening relation implicitly assuming that $c_j\neq0$.
\begin{remark}\label{remark_straighteningRelations}
As proved in \cite{chirivi_ls}, the straightening relations in \textnormal{(LSA2)} generate all the relations among the generators in $\LS_1$ of $A$; in other words, by using \textnormal{(LSA2)} a suitable number of times, it is possible to write any non-standard monomial $\pi_1\pi_2\cdots\pi_r$, with $\pi_1,\ldots,\pi_r$ of degree $1$, as a linear combination of LS-paths of degree $r$, equivalently, of standard monomials of degree $r$.
\end{remark}
\begin{remark}\label{remark_minimal}
If $\pi_1,\pi_2$ have comparable supports, as in \textnormal{(LSA3)} above, and $\pi_{0,1},\pi_{0,2}$ is the canonical form of the non-standard monomial $\pi_1\pi_2$, then $\pi_{0,1} \pfs \pi_{0,2}=\pi_1 \pfs \pi_2$, hence $\pi_{0,1}\pi_{0,2}$ is the $\trianglelefteq$--minimal monomial appearing in the straightening relation for $\pi_1\pi_2$.
\end{remark}
The above definition may be slightly generalized allowing the coefficient of $\pi_{0,1}\pi_{0,2}$ to be any non-zero scalar. (Compare with the definition of special LS algebra in \cite{chirivi_ls}.)

We call the elements $\pi\in A$, for $\pi\in\LS_1$, a \emph{path basis} for the LS algebra $A$. Note that an algebra may have various path basis.

If all bonds equal $1$ then we say that $A$ is an Hodge algebra (see \cite{dep}).
\begin{definition}\label{definition_discrete}
The \emph{discrete} LS algebra $A$ over the poset with bonds $(S,\leq,\bond)$ has the simplest straightening relations, i.e. for any non-standard monomial $\pi_1\pi_2$ of degree $2$ we have: either $\pi_1\pi_2=\pi_{0,1}\pi_{0,2}$, if $\pi_1,\pi_2$ have comparable support and $\pi_{0,1}\pi_{0,2}$ is the canonical form of $\pi_1\pi_2$, or $\pi_1\pi_2=0$ if the supports of $\pi_1$ and $\pi_2$ are not comparable.
\end{definition}

We see a concrete realization of the discrete algebra $A$ over a totally ordered set $S=\{\sigma_0<\sigma_1<\cdots<\sigma_N\}$ with bonds $\bond$. First of all any pair of LS-paths has comparable supports being $S$ totally ordered. Now let $\K[S]$ be the polynomial algebra with variables $x_\sigma$, $\sigma\in S$. Given an LS-path $\pi$, let $x^\pi$ be the monomial $\prod_{\sigma}x_\sigma^{M_\sigma\pi(\sigma)}$. These monomials are linearly independent as $\pi$ runs in $\LS$ and $x^{\pi \pfs \pi'}=x^{\pi}x^{\pi'}$. In particular the relations $x^{\pi}{x^{\pi'}} = x^{\pi_0}x^{\pi_0'}$ holds if $\pi_0\pi_0'$ is the canonical form of $\pi\pi'$. Hence we have an injective morphism of algebras by linearly extending $A\ni\pi\longmapsto x^\pi\in\K[S]$. Clearly the image of this map is isomorphic to $A$; in the sequel we will identify these two algebras.
\begin{remark}\label{remark_discreteIsDomain}
Being a subring of a polynomial ring, a discrete LS algebra over a totally ordered set is a graded domain. Moreover, as proved in \cite{chirivi_ls}, any LS algebra over a totally ordered set is a domain.
\end{remark}

\smallskip

Recall the following crucial degeneration result (see \cite{chirivi_ls}).
\begin{theorem}\label{theorem_degeneration}
Let $A$ be an LS algebra over the poset with bonds $(S,\leq,\bond)$. Then there exists a flat $\K[t]$--algebra $\mathcal{A}$ such that: $\mathcal{A}/(t-a)$ is isomorphic to $A$ for all $a\in\K^*$ and $\mathcal{A}/(t)$ is isomorphic to the discrete LS algebra over $(S,\leq,\bond)$.
\end{theorem}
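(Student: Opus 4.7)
The plan is a Rees-algebra style construction: assign integer weights to elements of $S$ refining the order $\trianglelefteq$ on LS-paths, use them to rescale the path generators of $A$ inside $A[t,t^{-1}]$, and take the resulting $\K[t]$-subalgebra as $\mathcal{A}$. Fix any total order $\leqt$ refining $\leq$, list $S=\{\sigma_0\leqt\cdots\leqt\sigma_N\}$, set $M:=\lcm_{\sigma\in S}M_\sigma$, and define $w_{\sigma_i}:=(2M+1)^i$. Extending to $w(\pi):=\sum_i M\pi(\sigma_i)\,w_{\sigma_i}$ for $\pi:S\to\Q$ gives a $\pfs$-additive, $\N$-valued function on $\LS$. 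Because the base $2M+1$ is chosen so that the top coordinate (in the $\leqt$-enumeration) dominates the combined contribution of all lower ones at degree $\leq 2$, one has $\pi\lesstrlex\pi'\Rightarrow w(\pi)<w(\pi')$ for LS-paths of equal degree $\leq 2$; a fortiori $\pi\trianglelefteq\pi'$ with $\pi\neq\pi'$ forces $w(\pi)<w(\pi')$.

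For $\pi\in\LS$ set $\widetilde\pi:=t^{-w(\pi)}\pi\in A[t,t^{-1}]$, and let $\mathcal{A}$ be the $\K[t]$-subalgebra of $A[t,t^{-1}]$ generated by $\{\widetilde\pi:\pi\in\LS_1\}$. For $\pi_1,\pi_2\in\LS_1$, take the straightening $\pi_1\pi_2=\sum_j c_j\pi_{j,1}\pi_{j,2}$ in $A$ and identify each standard degree-$2$ monomial $\pi_{j,1}\pi_{j,2}$ with the LS-path $\rho_j:=\pi_{j,1}\pfs\pi_{j,2}$. Then
\[
\widetilde{\pi_1}\,\widetilde{\pi_2}\;=\;\sum_j c_j\, t^{\,w(\rho_j)-w(\pi_1\pfs\pi_2)}\,\widetilde{\rho_j},
\]
where by \textnormal{(LSA2)} and the weight property all exponents $k_j:=w(\rho_j)-w(\pi_1\pfs\pi_2)$ are non-negative, with $k_j=0$ exactly when $\rho_j=\pi_1\pfs\pi_2$ as functions on $S$. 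Every $\rho\in\LS$ factors canonically as $\rho=\sigma_1\pfs\cdots\pfs\sigma_r$ with $\sigma_h\in\LS_1$, so $\widetilde\rho=\widetilde{\sigma_1}\cdots\widetilde{\sigma_r}$ lies in $\mathcal{A}$; iterated degree-$2$ straightening thus writes every product of generators as a $\K[t]$-linear combination of $\widetilde\rho$'s. Since $\LS$ is $\K$-linearly independent in $A$, the family $\{\widetilde\rho:\rho\in\LS\}$ is $\K[t]$-linearly independent in $A[t,t^{-1}]$, so $\mathcal{A}$ is a free — in particular flat — $\K[t]$-module with basis $\{\widetilde\rho:\rho\in\LS\}$.

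It remains to identify the fibres. For $a\in\K^\ast$, specialising $t\mapsto a$ sends each $\widetilde\rho$ to the nonzero scalar multiple $a^{-w(\rho)}\rho$ of $\rho$, giving a basis-to-basis isomorphism $\mathcal{A}/(t-a)\xrightarrow{\sim}A$. Modulo $t$ only the $k_j=0$ terms in the displayed formula survive: if $\pi_1,\pi_2$ have comparable supports then by \textnormal{(LSA3)} the unique surviving term is $\widetilde{\pi_1\pfs\pi_2}$ with coefficient $1$; if not, $\pi_1\pfs\pi_2$ is not an LS-path, no $\rho_j$ can equal it, and $\widetilde{\pi_1}\widetilde{\pi_2}=0$ in $\mathcal{A}/(t)$. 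These are precisely the straightening relations defining the discrete LS algebra over $(S,\leq,\bond)$. The delicate point is the weight construction: because every straightening relation lives in degree $2$, one only needs a single $\pfs$-additive $\Z$-valued $w$ that strictly refines $\trianglelefteq$ on degree-$2$ LS-paths, and the geometric choice $w_{\sigma_i}=(2M+1)^i$ after fixing a refining total order does exactly that.
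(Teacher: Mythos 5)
The paper itself contains no proof of this theorem: it is recalled from \cite{chirivi_ls}, where it is established by essentially the same Rees-algebra/filtration construction you propose, so your route is not genuinely different from the source. Your argument is correct. The key numerical point checks out: for degree-two paths the integers $M\pi(\sigma_j)$ lie in $\{0,1,\dots,2M\}$, so if $\pi\lesstrlex\pi'$ and $i$ is the $\leqt$-largest index where they differ, the gain of at least $(2M+1)^i$ at position $i$ strictly dominates the maximal possible loss $2M\sum_{j<i}(2M+1)^j=(2M+1)^i-1$ from the lower positions; hence $w$ strictly refines $\trianglelefteq$ on degree-two paths, which suffices because every straightening relation is quadratic. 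Three points you pass over quickly are all covered by facts quoted in the paper, but deserve explicit citation: (i) identifying a standard monomial $\pi_{j,1}\pi_{j,2}$ with the single basis element $\rho_j=\pi_{j,1}\pfs\pi_{j,2}$ uses the uniqueness of the standard decomposition implicit in the definition of canonical form; (ii) the termination of iterated quadratic straightening in degree $\geq 3$, which your spanning claim needs so that every product of generators becomes a $\K[t]$-combination (with non-negative $t$-exponents accumulating at each step) of the $\widetilde\rho$, is exactly Remark \ref{remark_straighteningRelations}; and (iii) to conclude $\mathcal{A}/(t)\simeq A_0$ from the matching of quadratic relations you should invoke the quadratic presentation $A_0\simeq R/I$ given in the paper, obtaining a surjection $A_0\twoheadrightarrow\mathcal{A}/(t)$ which is an isomorphism because both sides have $\LS$ as a graded basis. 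With those references supplied, the proof is complete.
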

This allows to recover certain properties of an LS algebra by studying the same properties for the discrete LS algebra, a completely combinatorial object.

Sometimes one can go a step further and consider the Stanley-Reisner ring $\K\{S\}$ of $(S,\leq)$ that we can shortly define as the discrete algebra over the poset with bonds $(S,\leq,1)$. This happens for example for the Cohen-Macaulay property (see \cite{chirivi_ls}).
\begin{theorem}
An LS algebra over the poset with bonds $(S,\leq,\bond)$ is Cohen-Macaulay if and only if $\K\{S\}$ is so.
\end{theorem}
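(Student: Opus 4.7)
The plan is to chain two flat degenerations. First, apply \myref{Theorem}{theorem_degeneration} to produce a graded flat $\K[t]$-algebra $\mathcal{A}$ whose generic fibers are all isomorphic to $A$ and whose special fiber $\mathcal{A}/(t)$ is the discrete LS algebra $A_d$ over $(S,\leq,\bond)$. Second, I would carry out (or invoke) an analogous flat degeneration from $A_d$ to the Stanley-Reisner ring $\K\{S\}$, i.e. the discrete LS algebra over $(S,\leq,1)$. If each degeneration preserves the Cohen-Macaulay property fiberwise, then chaining gives the desired equivalence.

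For the first reduction, flatness of $\mathcal{A}$ over the PID $\K[t]$ means that $t$ is a non-zero-divisor and that every graded piece $\mathcal{A}_n$ is $\K[t]$-free; consequently the Hilbert series is constant along the family and all fibers share the same Krull dimension. A homogeneous system of parameters in $A_d$ lifts through graded Nakayama to $\mathcal{A}$ and specializes to a regular sequence in every fiber, giving the implication $A_d$ CM $\Rightarrow A$ CM. For the converse one exploits the very specific shape of the degeneration in \myref{Theorem}{theorem_degeneration}: the leading forms of the straightening relations (LSA2) give precisely the defining ideal of $A_d$ on the same set of generators, so writing $\mathcal{A} = R[t]/J$ for a polynomial ring $R$, the minimal graded free resolution of $A_d$ over $R$ lifts to a $\K[t]$-flat free resolution of $\mathcal{A}$ over $R[t]$. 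Hence $A$ and $A_d$ have identical graded Betti numbers over $R$, Auslander-Buchsbaum yields equal projective dimension, and therefore equal depth, so the two CM conditions coincide.

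For the second reduction, the bonds $\bond$ can be interpolated to the trivial bonds $1$ by a further one-parameter family: concretely, using the embedding $A_d \hookrightarrow \K[S]$ given by $\pi \mapsto \prod_\sigma x_\sigma^{M_\sigma \pi(\sigma)}$, a term-order degeneration of the toric ideal of $A_d$ produces a flat family whose special fiber is $\K\{S\}$, to which the argument of the preceding paragraph applies verbatim. Equivalently, one can apply Reisner's criterion directly: both algebras are monomial rings supported on the same order complex $\Delta(S)$, and a local-cohomology computation (Hochster's formula on $\K\{S\}$ together with its toric counterpart for $A_d$) shows that the vanishing criteria are simultaneously equivalent to the same statements about the reduced homology of links in $\Delta(S)$.

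The main obstacle is the converse direction in the first reduction, since depth can strictly decrease under a general Gröbner-type degeneration and so a flat family with CM generic fiber need not have CM special fiber. This forces one to argue not by abstract semi-continuity but by exhibiting that the LS straightening relations degenerate without producing new syzygies, which is the one place where the specific combinatorics of LS-paths and the minimality clause (LSA3) are really used.
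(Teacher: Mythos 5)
The paper does not actually prove this theorem; it is recalled from \cite{chirivi_ls}, so there is no internal proof to compare against. Judged on its own, your plan has two genuine defects. The most serious is that your ``second reduction'' cannot exist: a graded flat family over $\K[t]$ has constant Hilbert function across its fibres, but the discrete LS algebra $A_d$ over $(S,\leq,\bond)$ and the Stanley--Reisner ring $\K\{S\}$ have \emph{different} Hilbert functions as soon as some bond exceeds $1$. Already for $S=\{\sigma_0<\sigma_1\}$ with $\bond(\sigma_0,\sigma_1)=2$ one has $\dim (A_d)_1=|\LS_1|=3$ (the paths $(1,0)$, $(1/2,1/2)$, $(0,1)$, giving $\K[x^2,xy,y^2]$) while $\dim \K\{S\}_1=2$. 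Any Gr\"obner or term-order degeneration of a presentation of $A_d$ keeps the Hilbert function, so its special fibre can never be $\K\{S\}$; moreover ``the toric ideal of $A_d$'' is undefined for a non--totally-ordered $S$, where $A_d$ is not even a domain. The bridge between $A_d$ and $\K\{S\}$ must therefore be structural rather than deformation-theoretic (in \cite{chirivi_ls} it goes through analysing $A_d$ as a module over $\K\{S\}$ and Reisner's criterion); your parenthetical appeal to ``Hochster's formula together with its toric counterpart'' gestures at this but is not an argument, and the premise that $A_d$ is a monomial ring supported on $\Delta(S)$ in the Stanley--Reisner sense is false.

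Second, in the first reduction the implication needed for the ``only if'' direction, namely $A$ Cohen--Macaulay $\Rightarrow$ $A_d$ Cohen--Macaulay, is not established. Lifting the minimal free resolution of the special fibre $A_d$ to $\mathcal{A}$ and specializing at $t=a\neq 0$ yields a free resolution of $A$ that need not be minimal; this gives only $\beta_{ij}(A)\leq\beta_{ij}(A_d)$, hence $\operatorname{depth}A\geq\operatorname{depth}A_d$, which is the easy direction you already had. The assertion that the graded Betti numbers of $A$ and $A_d$ are ``identical'' is a non sequitur, and your closing paragraph concedes exactly this obstacle without repairing it. As written, the proposal establishes at most ``$\K\{S\}$ Cohen--Macaulay $\Rightarrow$ $A$ Cohen--Macaulay'', and even that only modulo the broken second reduction.
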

This is particularly interesting since $\K\{S\}$ is Cohen-Macaulay if and only if the order complex $\Delta(S)$ is Cohen-Macaulay (see \cite{reisner} or \cite{chirivi_ls} for the definition of a Cohen-Macaulay complex).

\subsection{Discrete algebra and toric varieties}

The following results are already stated in \cite{chirivi_ls} with somehow implicit proofs.
\begin{proposition}\label{proposition_discreteTotallyOrderedToric}
The discrete LS algebra over a totally ordered set with bonds is a normal domain. In particular it is the homogeneous coordinate ring of an irreducible projective $N$--dimensional (normal) toric variety.
\end{proposition}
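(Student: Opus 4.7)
The plan is to use the concrete realization of the discrete LS algebra $A$ as the subring of $\K[S]=\K[x_{\sigma_0},\ldots,x_{\sigma_N}]$ with $\K$-basis the monomials $x^\pi$, $\pi\in\LS$. First, $A$ is a domain as it is a subring of a polynomial ring. Second, by Proposition 3 of \cite{chirivi_ls} each LS-path is a sum of degree-one LS-paths, so $A$ is generated in degree one by the $x^\pi$ with $\pi\in\LS_1$, and $A=\K[\mathcal{M}]$ is the graded semigroup algebra of the submonoid $\mathcal{M}\subseteq\N^{N+1}$ of exponent vectors of LS-paths. Hence $\Proj(A)$ is automatically a projective toric variety, and all that remains is to compute its dimension and to prove normality.

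For the dimension, the function $\delta_{\sigma_i}$ (value $1$ on $\sigma_i$ and $0$ elsewhere) is a degree-one LS-path for each $i=0,\ldots,N$, contributing the exponent vector $M_{\sigma_i}\,e_i$ to $\mathcal{M}$. Therefore the group $L:=\Z\mathcal{M}$ has full rank $N+1$, so $\dim A=N+1$ and $\dim\Proj(A)=N$. The same observation shows that the cone $\R_{\geq 0}\mathcal{M}$ coincides with $\R_{\geq 0}^{N+1}$, and so normality of $A$ is equivalent to the saturation equality $\mathcal{M}=L\cap\N^{N+1}$.

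The nontrivial inclusion $L\cap\N^{N+1}\subseteq\mathcal{M}$ is the main step. Given $v\in L\cap\N^{N+1}$, set $\pi(\sigma_i):=v_i/M_{\sigma_i}\in\Q_{\geq 0}$; the plan is to show $\pi\in\LS$, from which $v\in\mathcal{M}$ follows automatically. Conditions (1) and (2) of Definition~\ref{definition_LSpath} are immediate. Condition (3) amounts to specific $\Q$-linear combinations of the $v_i$ being integers; each such equality is $\Z$-linear in $v$ (modulo $\Z$) and holds on every generator of $\mathcal{M}$ by the very definition of an LS-path, hence it extends to all of $L$. The delicate point, which I see as the main obstacle, is that condition (3) involves the bond on pairs of consecutive elements of $\supp\pi$, while the natural equalities available on $L$ involve the bonds $\bond(\sigma_j,\sigma_{j+1})$ along the whole chain $S$. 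This is handled by the gcd structure built into the extension of $\bond$ to comparable pairs: a rational number is divisible by every $\bond(\sigma_j,\sigma_{j+1})$ in a range if and only if it is divisible by their gcd, which by definition is $\bond$ of the endpoints of that range. Once this is checked, $\pi$ is an LS-path and $v\in\mathcal{M}$, completing the proof.
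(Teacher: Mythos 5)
Your proposal is correct and follows essentially the same route as the paper: both prove normality by realizing the discrete algebra as the semigroup algebra of a cone intersected with a lattice (the paper's lattice $\sfM$ of functions $\gamma$ with $\bond(\sigma_j,\sigma_{j+1})\sum_{i\leq j}\gamma(\sigma_i)/M_{\sigma_i}\in\Z$ is exactly your $\Z\mathcal{M}$, and your saturation check $L\cap\N^{N+1}\subseteq\mathcal{M}$ is precisely the step the paper leaves implicit in asserting $\K[\sfM\cap\Gamma]\simeq A$). Your handling of the delicate point via the gcd definition of $\bond$ on comparable pairs is the same equivalence the paper records in the remark following Definition \ref{definition_LSpath}.
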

\begin{proof}
The discrete algebra $A$ over the poset with bonds $(S,\leq,\bond)$ is a domain by Remark \ref{remark_discreteIsDomain}; we prove that it is normal by showing that it is the coordinate ring of an affine toric variety in the sense of \cite{fulton}. Let $\sfM$ be the abelian group of all functions $\gamma:S\longrightarrow\Z$ such that
\[
\begin{array}{l}
\bond(\sigma_j,\sigma_{j+1})\displaystyle\sum_{i=0}^j\frac{\gamma(\sigma_i)}{M_{\sigma_i}}\in\Z,\quad\textrm{for all }j=0,\ldots,N-1,\\[1em]
\displaystyle\sum_{i=0}^N\frac{\gamma(\sigma_i)}{M_{\sigma_i}}\in\Z.\\
\end{array}
\]
This is clearly a lattice in $\R^S$ and, denoting by $\Gamma$ the cone $\R_{\geq 0}^S$, the group algebra associated to the semigroup $\sfM\cap\Gamma$ is isomorphic $A$. So $A$ is the coordinate ring of the irreducible (normal) toric variety associated to the torus $\Hom(\sfM,\Z)$ and to the cone dual to $\Gamma$.

Moreover $\Proj(A)$ is projectively normal hence it is a projective (normal) toric variety; its dimension is $N$ as proved in \cite{chirivi_ls}.
\end{proof}

We consider now a general poset with bonds $(S,\leq,\bond)$; we want to prove that the discrete LS algebra $A$ over $(S,\leq,\bond)$ is the homogeneous coordinate ring of the glueing of (normal) toric varieties along (normal) toric subvarieties.

Let $R = \K[x_\pi\,|\,\pi\in\LS_1]$ be the polynomial ring with an indeterminate $x_\pi$ for each LS-path of degree $1$ and denote by $I$ the ideal generated by
\[
\mathcal{R}_{\pi_1,\pi_2} = \left\{
\begin{array}{ll}
x_{\pi_1}x_{\pi_2}-x_{\pi_{0,1}}x_{\pi_{0,2}} & \textrm{if }\pi_1,\pi_2\textrm{ have comparable supports, where}\\
 & \pi_{0,1}\pi_{0,2}\textrm{ is the canonical form of }\pi_1\pi_2\\
 & \\
x_{\pi_1}x_{\pi_2} & \textrm{otherwise}
\end{array}
\right.
\]
as $\pi_1\pi_2$ runs over all non-standard monomials of degree $2$. It is clear by Definition \ref{definition_LSalgebra} and Remark \ref{remark_straighteningRelations} that $A\simeq R / I$.

Now let $\calC$ be a chain in $S$, then $(\mathcal{C},\leq_{|\mathcal{C}},\bond_{|\mathcal{C}})$ is a totally ordered set with bonds. Consider the ideal $I_{\mathcal{C}}$ of $R$ generated by $I$ and $x_\pi$ for all $\pi\in\LS_1$ such that $\supp\pi\not\subseteq\mathcal{C}$.
Again by Definition \ref{definition_LSalgebra} and Remark \ref{remark_straighteningRelations} the quotient $R / I_\mathcal{C}$ is the discrete LS algebra $A_\mathcal{C}$ of the poset with bonds $(\mathcal{C},\leq_{|\mathcal{C}},\bond_{|\mathcal{C}})$. Moreover, by Proposition~27 in \cite{chirivi_ls}, $A$ and $A_{\mathcal{C}}$ are reduced.

Now let $r=|\LS_1|$ and consider the varieties $X=\Proj(A)$ and $X_\calC=\Proj(A_\calC)$ for $\calC$ a chain in $S$; we have the inclusions $X_\calC\,\subseteq\,X\,\subseteq\,\Pj^r$. Since $I_{\calC_1}+I_{\calC_2} = I_{\calC_1\cap\calC_2}$ for any pair of chains $\calC_1,\calC_2$ of $S$, we find $X_{\calC_1}\cap X_{\calC_2}=X_{\calC_1\cap\calC_2}$ as schemes.

Putting together the quotient maps we find a map
\[
A\,\longrightarrow\,\prod_{\mathcal{C}}A_{\mathcal{C}}
\]
where the product runs over all maximal chains $\mathcal{C}$ of $S$. This is an injective map by \textnormal{(LSA1)}. All rings in the right hand side are domains of dimension $N$ by Proposition \ref{proposition_discreteTotallyOrderedToric}. In particular $A$ has the same dimension $N$ and the ideals $I_{\mathcal{C}}/I$ are all the minimal primes of $A$; indeed if $A$ had dimension bigger than $N$ or there were other minimal primes then the above map could not be injective. This proves that the subvarieties $X_{\mathcal{C}}$ are the irreducible components of $X$.

Finally, by Proposition \ref{proposition_discreteTotallyOrderedToric}, a subvariety $X_\calC$, with $\calC$ a chain in $S$, is an irreducible projective (normal) toric variety.

We summarize this discussion in the following
\begin{theorem}
The subvarieties $X_\calC$, as $\calC$ runs over the set of maximal chains of $S$, are the irreducible components of $X$. They are projective (normal) toric varieties and intersect along (normal) toric subvarieties.
\end{theorem}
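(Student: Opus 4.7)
The plan is to pin down the irreducible decomposition of $X$ at the algebraic level by showing that the ideals $I_\calC$, as $\calC$ ranges over the maximal chains of $S$, are exactly the minimal primes over $I$. Once this is done, the irreducible components of $X=\Proj(A)$ are precisely the $X_\calC=\Proj(A_\calC)$ for maximal $\calC$; each is an irreducible projective normal toric variety by \myref{Proposition}{proposition_discreteTotallyOrderedToric} applied to $(\calC,\leq_{|\calC},\bond_{|\calC})$; and the intersection statement follows immediately from the scheme identity $X_{\calC_1}\cap X_{\calC_2}=X_{\calC_1\cap\calC_2}$ already established, since the intersection of two maximal chains is again a nonempty chain to which the same proposition applies.

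The technical core is the identity $I=\bigcap_{\calC\text{ maximal}}I_\calC$ in $R$. The inclusion $I\subseteq\bigcap I_\calC$ is built into the construction. For the reverse, let $f\in R$ belong to every $I_\calC$ and consider its image $\bar f\in A$. By \textnormal{(LSA1)} we may write $\bar f=\sum_{\pi\in\LS}c_\pi\,\pi$ uniquely. The quotient $A\twoheadrightarrow A_\calC$ induced by $I\subseteq I_\calC$ sends an LS-path $\pi$ to itself when $\supp\pi\subseteq\calC$ and to $0$ otherwise, as a direct consequence of Definition \ref{definition_discrete}: the extra generators $x_\pi$ of $I_\calC$ with $\supp\pi\not\subseteq\calC$, combined with the multiplicativity built into the canonical form, kill exactly the basis elements whose support leaves $\calC$. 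Therefore $f\in I_\calC$ forces $c_\pi=0$ for every $\pi$ with $\supp\pi\subseteq\calC$. Since any LS-path has totally ordered support by Definition \ref{definition_LSpath}(2) and every chain in $S$ extends to a maximal one, letting $\calC$ vary kills every $c_\pi$, so $\bar f=0$ and $f\in I$.

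With this identity in hand, the remainder is formal. Each $I_\calC$ is prime because $A_\calC$ is a domain (\myref{Remark}{remark_discreteIsDomain}), and $I_\calC\subseteq I_{\calC'}$ holds exactly when $\calC'\subseteq\calC$, since enlarging a chain removes generators of the form $x_\pi$ with $\supp\pi\not\subseteq\calC$ and introduces none. Hence two distinct maximal chains yield incomparable primes, and the minimal primes over $I$ are precisely the $I_\calC$ with $\calC$ maximal. Passing to $\Proj$ reads off the asserted irreducible components, and the toric-normal nature of both the $X_\calC$ and their pairwise intersections is again \myref{Proposition}{proposition_discreteTotallyOrderedToric}.

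The only real obstacle is the identity $I=\bigcap_\calC I_\calC$; the remaining steps are routine translations of combinatorial containments into ideal-theoretic ones. That identity itself is short, but it is where \textnormal{(LSA1)} really earns its keep, since it is precisely the existence of a basis compatible with every chain-restriction that lets us test membership one maximal chain at a time.
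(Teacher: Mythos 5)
Your proof is correct and follows essentially the same route as the paper, which sets up exactly the same objects $R$, $I$, $I_\calC$, the isomorphisms $A\simeq R/I$ and $A_\calC\simeq R/I_\calC$, the identity $I_{\calC_1}+I_{\calC_2}=I_{\calC_1\cap\calC_2}$, and Proposition \ref{proposition_discreteTotallyOrderedToric}, and then states the theorem as a summary of that discussion. Your decomposition $I=\bigcap_{\calC\ \mathrm{maximal}}I_\calC$, proved by testing the LS-path expansion of $\bar f$ against each chain-restriction quotient, together with the primality and incomparability of the $I_\calC$, is precisely the step the paper leaves implicit, and your argument for it is sound.
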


\noindent This answers some questions posed by Knutson in \cite{knutson} (see Section~5 there).

\subsection{Group Quotient} In the following theorem we see that the discrete LS algebra over a totally ordered set is the ring of invariant of a certain finite abelian group.
\begin{theorem}\label{theorem_quotient}
Suppose that the base field is $\C$. The discrete LS algebra over a totally ordered set of length $N$ with bonds is the ring of invariants for the action of a finite abelian group $G\subseteq\GL_{N+1}(\C)$ on a polynomial ring. Furthermore $G$ can be chosen to contain no pseudo-reflection.
\end{theorem}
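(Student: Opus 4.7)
The plan is to leverage the toric realization from \myref{Proposition}{proposition_discreteTotallyOrderedToric}: there $A$ is identified with the semigroup algebra $\C[\sfM\cap\Gamma]$, with $\sfM\subseteq\Z^S$ the sublattice of integer-valued functions satisfying the divisibility conditions, and $\Gamma=\R_{\geq 0}^S$. Since $\lcm(M_{\sigma_0},\ldots,M_{\sigma_N})\cdot\Z^S\subseteq\sfM$, the quotient $H:=\Z^S/\sfM$ is a finite abelian group, and I take $G:=\Hom(H,\C^*)$ (its Cartier dual) acting diagonally on $R:=\C[x_{\sigma_0},\ldots,x_{\sigma_N}]$ by $g\cdot x_{\sigma_i}=g([e_{\sigma_i}])\,x_{\sigma_i}$. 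Since the classes $[e_{\sigma_i}]$ generate $H$, this action is faithful and realises $G$ as a finite subgroup of the diagonal torus $(\C^*)^{N+1}\subseteq\GL_{N+1}(\C)$. Since $g\cdot x^\gamma=g([\gamma])\,x^\gamma$, a monomial $x^\gamma$ is $G$-invariant precisely when $\gamma\in\sfM$; therefore $R^G=\C[\sfM\cap\N^S]=A$, which yields the first claim.

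For the pseudo-reflection statement, a non-identity diagonal element $g\in G$ is a pseudo-reflection if and only if $g([e_{\sigma_i}])=1$ for all $i$ except some $i_0$. Such a $g\neq 1$ exists (for a given $i_0$) if and only if $\{[e_{\sigma_i}]\}_{i\neq i_0}$ fails to generate $H$, equivalently, if and only if there is no $\gamma\in\sfM$ with $\gamma_{i_0}=1$. Hence it suffices to construct, for every $i_0\in\{0,1,\ldots,N\}$, an element $\gamma\in\sfM$ with $\gamma_{i_0}=1$.

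The main obstacle is this final construction. Writing $b_k=\bond(\sigma_{k-1},\sigma_k)$, for $i_0=0$ I would take $\gamma=e_{\sigma_0}-(M_{\sigma_1}/M_{\sigma_0})\,e_{\sigma_1}$; verifying $\gamma\in\sfM$ reduces to the inclusion $M_{\sigma_0}=b_1\mid\lcm(b_1,b_2)=M_{\sigma_1}$. The mirror construction handles $i_0=N$. For $0<i_0<N$, I would seek $\gamma$ supported on the three consecutive elements $\{\sigma_{i_0-1},\sigma_{i_0},\sigma_{i_0+1}\}$ with $\gamma_{i_0}=1$ and $\sum_i\gamma_i/M_{\sigma_i}=0$; translating the partial-sum divisibility condition at step $i_0$ reduces solvability to the linear congruence $av\equiv -1\pmod u$, where $u=b_{i_0}/g$ and $v=b_{i_0+1}/g$ with $g=\gcd(b_{i_0},b_{i_0+1})$. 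Since $\gcd(u,v)=1$, this is solvable by B\'ezout's identity, and the remaining coordinate $\gamma_{i_0+1}$ is then automatically integer by the lattice relations.
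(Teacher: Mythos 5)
Your proof is correct, and while it constructs what is in the end the same group as the paper, it does so from the opposite side of the duality, and the pseudo-reflection argument is genuinely different. The paper writes down explicit diagonal generators $e_0,\ldots,e_N$ (with entries that are powers of a fixed root of unity built from the $M_i$ and $b_i$), computes the eigenvalue of a monomial under each $e_i$, and reads off that the invariant monomials are exactly the $x^\pi$; you instead take $G=\Hom(\Z^S/\sfM,\C^*)$ and get the same conclusion formally from Cartier duality and \myref{Proposition}{proposition_discreteTotallyOrderedToric} --- the two groups coincide since a finite subgroup of the diagonal torus equals its double perp. The real divergence is in the no-pseudo-reflection step: the paper argues directly on group elements, writing a generic $e_0^{t_0}\cdots e_N^{t_N}$ and showing via a chain of congruences that if all diagonal entries but one equal $1$ then so does the remaining one; you dualize the condition into the statement that $\sfM$ surjects onto each coordinate of $\Z^S$, and then exhibit explicit lattice vectors $\gamma\in\sfM$ with $\gamma_{i_0}=1$ (supported on at most three consecutive elements). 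I checked your middle case: with $p=\bond(\sigma_{i_0-1},\sigma_{i_0})$, $q=\bond(\sigma_{i_0},\sigma_{i_0+1})$, $g=\gcd(p,q)$, writing $\gamma_{i_0-1}=(M_{\sigma_{i_0-1}}/p)\alpha$ the condition at step $i_0$ becomes $v\alpha\equiv -1\pmod{u}$ with $u=p/g$, $v=q/g$ coprime, and the third coordinate is indeed automatically integral because $q$ divides $M_{\sigma_{i_0+1}}$; so the sketch closes. What each approach buys: the paper's computation is self-contained and also hands you the determinant formula used later in the Gorenstein criterion, while yours makes the group-theoretic content (why there are no pseudo-reflections) more conceptual --- it is exactly the statement that no variable can be dropped from the lattice $\sfM$ --- and would generalize to any saturated finite-index sublattice of $\Z^S$ containing the coordinate vectors' multiples.
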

\begin{proof}
Let $S=\{\sigma_0<\sigma_1<\cdots<\sigma_N\}$, $b_i=\bond(\sigma_i,\sigma_{i+1})$ for $i=0,\ldots,N-1$, set also $b_{-1}=b_N=1$, $M_i=\lcm(b_{i-1},b_i)$ for $i = 0,1,\ldots,N$, and finally $M=\lcm(b_0,b_1,\ldots,b_{N-1})$. Denote by $\zeta$ a $M$--th primitive root of unity in $\C$ and, for $i = 1,\ldots,N$, define the diagonal matrix
\[
e_i = (\zeta^{\frac{M}{M_0}b_i},\zeta^{\frac{M}{M_1}b_i},\ldots,\zeta^{\frac{M}{M_i}b_i},1,\ldots,1)\in\GL_{N+1}(\C).
\]
The finite abelian group $G$ generated by $e_1,e_2,\ldots,e_N$ acts linearly on the polynomial ring $R=\C[x_{\sigma_0},\ldots,x_{\sigma_N}]$. A generic monomial $\mathsf{m} = \prod_{j=0}^Nx_{\sigma_j}^{n_j}$ is an eigen-vector for $e_i$ with eigenvalue
\[
\zeta^{M b_i\sum_{0\leq j\leq i}\frac{n_j}{M_j}}.
\]
It is then clear that $R^G$ is spanned by the monomials $\mathsf{m}$ such that $b_i\sum_{0\leq j\leq i}\frac{n_j}{M_j}$ is an integer for all $i=0,1,\ldots,N$, i.e. by the monomials $x^\pi$ with $\pi$ an LS-paths. So $R^G$ is the discrete algebra over $S$.

Now we show that $G$ contains no pseudo-reflection. Indeed consider a generic element $e_1^{t_1}\cdots e_N^{t_N}=(\zeta^{a_0},\zeta^{a_1},\ldots,\zeta^{a_N})$ of $G$, where we have set
\[
a_i = \displaystyle\frac{M}{M_i}\sum_{i\leq j\leq N}b_jt_j
\]
with $t_0=0$. Note that the following four statements are equivalent:
\begin{equation}
\zeta^{a_i}=1\\
\end{equation}
\begin{equation}
\frac{M}{M_i}\sum_{i\leq j\leq N}b_jt_j \equiv 0 \pmod{M}\\
\end{equation}
\begin{equation}
\sum_{i\leq j\leq N}b_jt_j \equiv 0 \pmod{M_i}\\
\end{equation}
\begin{equation}
\sum_{i\leq j\leq N}b_jt_j \equiv 0 \pmod{b_{i-1}}\quad\textrm{and}\quad\sum_{i\leq j\leq N}b_jt_j \equiv 0 \pmod{b_i}.\\
\end{equation}
Hence if we suppose $\zeta^{a_i}=1$ for all $0\leq i\leq N$ but a single $j$ then we conclude also $\zeta^{a_j}=1$. This proves that $G$ contains no pseudo-reflection.
\end{proof}
So if $S$ is totally ordered, $\Proj(A)$ is the quotient $\Pj_{\C}^N / G$ for $G\subseteq\GL_{N+1}(\C)$ as in the proof of the proposition. Hence $\Proj(A)$ has some similarities with a weighted projective space. It turns out however that, in general, the toric varieties associated to discrete LS algebras over totally ordered sets are \emph{not} weighted projective spaces (see \cite{CFL} for an example).

Note that this theorem gives another proof that an LS algebra over a totally ordered poset is Cohen-Macaulay. The absence of pseudo-reflections will be used in a criterion for the Gorenstein property in the next section.

\subsection{The Gorenstein Property} As recalled above, the Cohen-Macaulay property of an LS algebra is a property of the order complex $\Delta(S)$. By contrast the Gorenstein property really depends on the bonds; for example $\K[x^2,xy,y^2]$, the discrete algebra over $({\sigma_0<\sigma_1,\bond(\sigma_0,\sigma_1)=2})$, is Gorenstein whereas $\K[x^3,x^2y,xy^2,y^3]$, the discrete algebra over $({\sigma_0<\sigma_1,\bond(\sigma_0,\sigma_1)=3})$, is not Gorenstein.

Following Stanley work \cite{stanley} we study the Gorenstein property via the Hilbert series. If $A=A_0\oplus A_1\oplus A_2\oplus\cdots$ is a graded $\K$--algebra of dimension $n$ with $d_r=\dim_\K A_r$ for all $r\geq 0$, let
\[
F_A(t) = \displaystyle\sum_{r\geq 0}d_rt^r
\]
be its Hilbert series. Here is a criterion by Stanley \cite{stanley}.
\begin{theorem}
\begin{itemize}
\item[(i)] If $A$ is Gorenstein then
\[
F_A(1/t) = (-1)^n t^k F_A(t)
\]
for some integer $k$.
\item[(ii)] If $A$ is a Cohen-Macaulay integral domain, then it is Gorenstein if and only if
\[
F_A(1/t) = (-1)^n t^k F_A(t)
\]
for some integer $k$.
\end{itemize}
\end{theorem}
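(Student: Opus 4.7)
The plan is to reduce everything to the Artinian case by modding out a homogeneous system of parameters, and then to read off Gorensteinness directly from the symmetry of the resulting finite Hilbert polynomial. I would start by fixing a homogeneous $A$-regular sequence $\theta_1,\ldots,\theta_n$ of degrees $e_1,\ldots,e_n$, which exists in both parts of the theorem by the Cohen--Macaulay hypothesis. Setting $B=A/(\theta_1,\ldots,\theta_n)$, the classical identity
\[
F_A(t) \,=\, \frac{F_B(t)}{\prod_{i=1}^n(1-t^{e_i})}
\]
together with a short direct calculation shows that the symmetry $F_A(1/t)=(-1)^n t^k F_A(t)$ is equivalent to a symmetry $F_B(1/t)=t^{k'} F_B(t)$ of the Hilbert polynomial of the Artinian ring $B$, where $k'=k+\sum_i e_i$. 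This moves all the questions into the Artinian graded setting, where Gorensteinness is simply the statement that the socle of $B$ is one-dimensional.

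For part (i), the key input is the graded canonical module $\omega_A$. If $A$ is Gorenstein then by definition $\omega_A\cong A(-k)$ as a graded $A$-module for some integer $k$, so $F_{\omega_A}(t)=t^k F_A(t)$. On the other hand, graded local duality supplies the functional equation
\[
F_{\omega_A}(t) \,=\, (-1)^n F_A(1/t),
\]
and equating these two expressions yields the claimed identity. Passing through the reduction above, this is in turn equivalent to the perfect Matlis pairing $B_i\times B_{s-i}\to B_s\cong\K$ on the Artinian Gorenstein ring $B$, which forces $\dim_\K B_i=\dim_\K B_{s-i}$ and hence the symmetry of $F_B$.

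For part (ii), the direction Gorenstein $\Rightarrow$ symmetry is just (i), so the real work is in the converse: a Cohen--Macaulay graded domain $A$ whose Hilbert series satisfies the symmetry must be Gorenstein. Running the canonical module argument in reverse, the hypothesis together with local duality forces $\omega_A$ and $A(-k)$ to have the same graded Hilbert function. Because $A$ is a domain and $\omega_A$ is maximal Cohen--Macaulay of rank one, it is torsion-free of rank one and hence isomorphic, up to a shift, to a (possibly fractional) graded ideal $I$ of $A$; matching the Hilbert series component by component inside the graded fraction field then forces $I=A$, so $\omega_A\cong A(-k)$ and $A$ is Gorenstein.

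The step I expect to be genuinely delicate is this final one in (ii): upgrading an equality of Hilbert series to an honest isomorphism $\omega_A\cong A(-k)$ really uses both rank-one torsion-freeness, where the domain hypothesis enters, and the graded structure. In the general Cohen--Macaulay (non-domain) case the same numerical symmetry can hold without $A$ being Gorenstein, so this is precisely the point where the argument must go beyond Hilbert-series bookkeeping and use the structure of $\omega_A$.
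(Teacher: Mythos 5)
The paper does not actually prove this statement: it is quoted directly from Stanley \cite{stanley} as an external criterion, so there is no internal proof to compare yours against. Judged on its own, your outline is the standard argument (Stanley's original one, reproduced in Bruns--Herzog): Artinian reduction by a homogeneous system of parameters for the symmetry in (i), and the canonical module for the converse in (ii). Part (i) is in order: Gorenstein implies Cohen--Macaulay, so a homogeneous system of parameters is a regular sequence, the identity $F_A(t)=F_B(t)/\prod_i(1-t^{e_i})$ holds, and the perfect pairing $B_i\times B_{s-i}\to B_s$ on the Artinian Gorenstein quotient gives the symmetry; the local-duality route via $F_{\omega_A}(t)=(-1)^nF_A(1/t)$ and $\omega_A\cong A(-k)$ is an equivalent packaging, and you do not need both.

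The one step to tighten is exactly the one you flag as delicate in (ii). Equality of the Hilbert functions of $\omega_A$ and $A(-k)$, together with ``$\omega_A$ is rank-one torsion-free, hence a fractional ideal,'' does not by itself force $\omega_A\cong A(-k)$: two fractional ideals can have equal Hilbert functions without coinciding, so ``matching component by component'' only bites once you have a containment. The standard fix: pick a nonzero homogeneous $x\in\omega_A$ of minimal degree $b$; torsion-freeness (here the domain hypothesis enters) gives $Ax\cong A(-b)$ inside $\omega_A$, so $t^{b}F_A(t)$ is coefficientwise at most $F_{\omega_A}(t)=t^{k}F_A(t)$; comparing initial degrees forces $b=k$, and then the coefficientwise inequality between two equal series forces $Ax=\omega_A$, i.e.\ $\omega_A\cong A(-k)$ and $A$ is Gorenstein. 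With that insertion the argument is complete. Your closing caveat is correct and worth keeping: for Artinian rings the symmetry alone does not detect Gorensteinness --- for instance $\K[x,y]/(x^2,xy,y^3)$ has symmetric Hilbert function $(1,2,1)$ but two-dimensional socle --- which is precisely why the domain hypothesis and the canonical module are genuinely needed in (ii).
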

Now for an LS algebra $A$ we have $d_r = |\LS_r|$, in particular $A$ has the same Hilbert series of the discrete LS algebra over $(S,\leq,\bond)$. Hence, by Theorem \ref{theorem_degeneration}, we find at once
\begin{corollary}\label{corollary_gorenstein}
Let $A$ be an LS algebra over $(S,\leq,\bond)$ that is a a Cohen-Macaulay domain. If the discrete LS algebra over $(S,\leq,\bond)$ is Gorenstein then $A$ is Gorenstein.
\end{corollary}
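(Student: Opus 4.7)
The plan is to reduce the statement to Stanley's two-part criterion stated just above, using the fact that an LS algebra and its discrete degeneration share the same Hilbert series.

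First I would observe that the hypothesis (LSA1) makes $\LS_r$ a $\K$-basis of the degree $r$ component of any LS algebra over $(S,\leq,\bond)$. Consequently, writing $A_{\mathrm{d}}$ for the discrete LS algebra over $(S,\leq,\bond)$, both $A$ and $A_{\mathrm{d}}$ have
\[
d_r \;=\; |\LS_r|
\]
for every $r\geq 0$, so $F_A(t)=F_{A_{\mathrm{d}}}(t)$. In particular, since the Krull dimension of a finitely generated graded $\K$-algebra equals the order of the pole of its Hilbert series at $t=1$, we have $\dim A=\dim A_{\mathrm{d}}=:n$.

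Next I would apply Stanley's criterion. By hypothesis $A_{\mathrm{d}}$ is Gorenstein, so part (i) of the cited theorem yields an integer $k$ with
\[
F_{A_{\mathrm{d}}}(1/t)\;=\;(-1)^n t^k F_{A_{\mathrm{d}}}(t).
\]
Replacing $F_{A_{\mathrm{d}}}$ by $F_A$ using the identity from the first step, the same symmetry holds for $F_A$:
\[
F_A(1/t)\;=\;(-1)^n t^k F_A(t).
\]
Since by hypothesis $A$ is a Cohen-Macaulay integral domain, part (ii) of Stanley's criterion now applies and gives that $A$ is Gorenstein.

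There is essentially no obstacle: the only point to check is the equality of Hilbert series and of dimensions, both of which follow from (LSA1) together with standard facts on graded algebras, and the flat degeneration of Theorem \ref{theorem_degeneration} is only used implicitly (through the basis statement in (LSA1)) to ensure that $A$ and $A_{\mathrm{d}}$ share the Hilbert function $r\mapsto |\LS_r|$.
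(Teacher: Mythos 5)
Your proposal is correct and follows the paper's own route: the paper likewise observes that $d_r=|\LS_r|$ for any LS algebra over $(S,\leq,\bond)$, so $A$ and the discrete algebra share the same Hilbert series, and then concludes immediately from Stanley's criterion (part (i) applied to the discrete algebra, part (ii) applied to $A$). Your added remarks on the equality of Krull dimensions via the pole order at $t=1$ just make explicit a detail the paper leaves implicit.
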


For a totally ordered set we have the following neat criterion. Let $S=\{\sigma_0<\sigma_1<\cdots<\sigma_N\}$, $b_i=\bond(\sigma_i,\sigma_{i+1})$ for $i=0,\ldots,N-1$, set also $b_{-1}=b_N=1$, $M_i=\lcm(b_{i-1},b_i)$ for $i=0,1,\ldots,N$
\begin{theorem}
Suppose that the base field is $\C$. An LS algebra over $(S,\leq,\bond)$ with $S$ totally ordered is Gorenstein if and only if
\[
S\ni\sigma_i\,\longmapsto\,\displaystyle\frac{1}{M_0}+\frac{1}{M_1}+\cdots+\frac{1}{M_i}\in\Q
\]
is an LS-path.
\end{theorem}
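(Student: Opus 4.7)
The plan is to reduce Gorensteinness to the discrete LS algebra and then invoke Watanabe's theorem on invariants of finite groups containing no pseudo-reflection.

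First I would observe that both $A$ and the discrete LS algebra $A_0$ over $(S,\leq,\bond)$ are Cohen-Macaulay integral domains with identical Hilbert series. Since $S$ is totally ordered, every subset of $S$ is a chain, so the order complex $\Delta(S)$ is a single simplex and the Stanley-Reisner ring $\K\{S\}$ is a polynomial ring; the Cohen-Macaulay theorem recalled above then yields that $A$ and $A_0$ are Cohen-Macaulay, while Remark \ref{remark_discreteIsDomain} gives that they are domains; equality of Hilbert series is built into (LSA1). Applying Stanley's criterion to both algebras shows that $A$ is Gorenstein if and only if $A_0$ is Gorenstein. Corollary \ref{corollary_gorenstein} already supplies one direction; the Hilbert-series symmetry argument delivers the converse.

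Next, by Theorem \ref{theorem_quotient}, I can write $A_0 = R^G$ with $R = \C[x_{\sigma_0},\ldots,x_{\sigma_N}]$ and $G \subseteq \GL_{N+1}(\C)$ a finite abelian group containing no pseudo-reflection. The key external input is Watanabe's theorem: for such a small subgroup $G$, the invariant ring $R^G$ is Gorenstein if and only if $G \subseteq \SL_{N+1}(\C)$. Since $G$ is generated by the diagonal matrices $e_i$, this reduces to $\det(e_i) = 1$ for every $i = 0,\ldots,N$, and a direct computation from the diagonal form in the proof of Theorem \ref{theorem_quotient} gives
\[
\det(e_i) \;=\; \zeta^{M b_i \sum_{j=0}^i 1/M_j},
\]
so the condition becomes $b_i \sum_{j=0}^i \frac{1}{M_j} \in \Z$ for all $i = 0, 1, \ldots, N$.

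The last step is to recognize these $N+1$ divisibility conditions as exactly the LS-path axioms of Definition \ref{definition_LSpath} applied to the function in the statement: positivity of the values and total-orderedness of the support are automatic, the conditions for $i = 0,\ldots,N-1$ are the partial-sum integrality requirements (positivity of the summands upgrading $\Z$ to $\N$), and the $i = N$ condition, combined with $b_N = 1$, is precisely the requirement that the total mass $\sum_{j=0}^N 1/M_j$ be a positive integer, i.e.\ the degree of the path. The only step I expect to require a careful citation is the invocation of Watanabe's theorem in its pseudo-reflection-free form; after that, the proof is a clean dictionary between the determinants of the generators $e_i$ already prepared in Theorem \ref{theorem_quotient} and the integrality axioms defining an LS-path, so no hard obstacle remains.
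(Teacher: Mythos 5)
Your proposal is correct and follows essentially the same route as the paper: reduce to the discrete algebra via Stanley's Hilbert-series criterion for Cohen--Macaulay domains, realize it as $R^G$ with $G$ pseudo-reflection-free by Theorem \ref{theorem_quotient}, apply Watanabe's theorem to reduce to $G\subseteq\SL_{N+1}(\C)$, and read off $\det(e_i)=1$ as the integrality conditions $b_i\sum_{j\leq i}1/M_j\in\Z$ defining the LS-path. If anything, you are slightly more explicit than the paper in noting that Corollary \ref{corollary_gorenstein} alone gives only one implication and that the converse needs the equality of Hilbert series together with Stanley's criterion.
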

\begin{proof} Let $A$ be an LS algebra over $(S,\leq,\bond)$. Then $\Delta(S)$ is Cohen-Macaulay being $S$ totally ordered; further $A$ is a domain by Remark \ref{remark_discreteIsDomain}. So it suffices to prove that the discrete algebra $A_0$ over $(S,\leq,\bond)$ is Gorenstein by Corollary \ref{corollary_gorenstein}.

We know by Theorem \ref{theorem_quotient} that $A_0$ is the fixed subring of $\C[x_{\sigma_0},\ldots,x_{\sigma_N}]$ by the action of a finite abelian group $G\subseteq\GL_{N+1}(\C)$; let us take for $G$ the group defined in the proof of Theorem 6.1 and keep using the notation there, in particular $G$ contains no pseudo-reflection. Then by \cite{stanley} $A_0$ is Gorenstein if and only if $G$ is contained in $\SL_{N+1}(\C)$. Now the determinant of the element $e_i$ is $\zeta^a$ with
\[
a = Mb_j\displaystyle\sum_{0\leq j\leq i}\frac{1}{M_j}.
\]
Hence this determinant is $1$ if and only if
\[
b_j\displaystyle\sum_{0\leq j\leq i}\frac{1}{M_j}\in\Z.
\]
So $G\subseteq\SL_{N+1}(\C)$ if and only if the map
\[
S\ni\sigma_i\,\longmapsto\,\displaystyle\frac{1}{M_0}+\frac{1}{M_1}+\cdots+\frac{1}{M_i}\in\Q
\]
is an LS-path.
\end{proof}

As a final remark, note that the necessary condition
\[
\displaystyle\frac{1}{M_0}+\frac{1}{M_1}+\cdots+\frac{1}{M_N}\in\N
\]
implies that there exist only a finite number of Gorenstein discrete LS algebras over a totally ordered set of a fixed dimension $N$.

\subsection{The Koszul Property} Let $A=A_0\oplus A_1\oplus\cdots$ be a graded $\K$--algebra finitely generated in degree $1$ with $A_0=\K$; denote by $A_+$ the ideal $A_1\oplus A_2\oplus\cdots$ and consider $\K=A/A_+$ as a graded $A$--module. The algebra $A$ is said to be \emph{Koszul} (or \emph{wonderful}) if $\Tor_i^A(\K,\K)$ is concentrated in degree $i$ for any $i\geq0$. (See \cite{kempf} for this definition.)

In this section we study the Koszul property for LS algebras. We begin with the following result about Gr\"obner basis; its proof is a direct generalization of the same result for the homogeneous coordinate ring of a Schubert variety in \cite{LLM}.
\begin{proposition}
The straightening relations in \textnormal{(LSA2)} gives a quadratic Gr\"obner basis for the defining ideal of an LS algebra as a quotient of the polynomial ring $\K[x_\pi\,|\,\pi\in\LS_1]$ via the map $x_\pi\longmapsto\pi$.
\end{proposition}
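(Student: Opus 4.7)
The plan is to exhibit a monomial order $\prec$ on $R=\K[x_\pi\,|\,\pi\in\LS_1]$ for which every degree-two straightening relation in (LSA2) has the non-standard monomial $x_{\pi_1}x_{\pi_2}$ as its leading term. Once this is established, the initial ideal $\mathrm{in}_\prec(I)$ contains all such $x_{\pi_1}x_{\pi_2}$, so $R/\mathrm{in}_\prec(I)$ is spanned by standard monomials; but by (LSA1) and Remark~\ref{remark_straighteningRelations} these already form a $\K$--basis of $A=R/I$, so the Hilbert series of $R/I$ and $R/\mathrm{in}_\prec(I)$ coincide. Macaulay's criterion then forces $\mathrm{in}_\prec(I)$ to be generated exactly by the non-standard quadratic monomials, which is precisely the statement that the straightening relations form a Gr\"obner basis; quadraticity is inherited from (LSA2).

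To construct $\prec$, I fix a total refinement $\leqt$ of $\leq$ and set $\sigma(\mathsf{m})=\pi_1\pfs\cdots\pfs\pi_r\in\Q^S$ for a monomial $\mathsf{m}=x_{\pi_1}\cdots x_{\pi_r}$. I order monomials first by total degree, then at equal degree by reversing $\leqtrlex$ on the weights $\sigma$ (so that the $\leqtrlex$--smaller weight makes the larger monomial), and finally at equal weight by reverse-lexicographic comparison of exponent vectors, with variables $x_\pi$ themselves totally ordered by the restriction of $\leqtrlex$ to $\LS_1$. Additivity of each layer under multiplication makes $\prec$ a bona fide monomial order with $1$ as its minimum.

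Checking leading terms splits into two cases. If $\pi_1,\pi_2$ have non-comparable supports, then $\pi_1\pfs\pi_2$ has non-chain support while each right-hand-side $\pi_{j,1}\pi_{j,2}$ is standard, hence $\pi_{j,1}\pfs\pi_{j,2}$ is an LS-path with chain support; combined with the weak inequality from (LSA2) this forces $\pi_1\pfs\pi_2\,\triangleleftneq\,\pi_{j,1}\pfs\pi_{j,2}$ strictly in every refinement $\leqt$, and the weight layer alone already elects $x_{\pi_1}x_{\pi_2}$. If the supports are comparable, (LSA3) together with the uniqueness of the canonical form singles out one right-hand term $\pi_{0,1}\pi_{0,2}$ of weight equal to $\pi_1\pfs\pi_2$; any other right-hand term is standard with strictly greater $\trianglelefteq$-weight and so loses to $x_{\pi_1}x_{\pi_2}$ at the weight layer. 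What remains is the head-to-head comparison of $x_{\pi_1}x_{\pi_2}$ with its canonical form $x_{\pi_{0,1}}x_{\pi_{0,2}}$ via the reverse-lex tiebreaker.

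This tiebreaker step is the main obstacle and the only piece of genuine combinatorics in the argument. The guiding idea is that in the canonical factorization the factor $\pi_{0,2}$ absorbs the top portion of the mass of $\pi_1\pfs\pi_2$, maximizing its contribution at the $\leqt$--top coordinates; any other degree-one decomposition $(\pi_1,\pi_2)$ of the same sum must shift some of this top mass to a strictly $\leqtrlex$--smaller second factor, and this redistribution creates a strictly larger exponent at a $\leqtrlex$--greater variable, which is exactly what makes the non-standard monomial the reverse-lex winner. Turning this redistribution picture into a rigorous reverse-lex inequality, simultaneously for every comparable-support straightening relation over an arbitrary poset with bonds, is the one non-routine verification; once it is in hand, the Gr\"obner-basis claim follows from the Macaulay bookkeeping of the first paragraph, and the basis is quadratic by construction since (LSA2) involves only degree-two relations.
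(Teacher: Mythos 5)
Your overall strategy is the same as the paper's: order monomials by degree and then by the weight $\pi_1\pfs\cdots\pfs\pi_r$ compared via $\leqtrlex$, so that (LSA2) pushes every standard monomial on the right-hand side of a straightening relation weakly above the non-standard monomial on the left. But your argument, as written, has a genuine gap at exactly the point you flag: when $\pi_1,\pi_2$ have comparable supports, the canonical form $\pi_{0,1}\pi_{0,2}$ has weight \emph{equal} to $\pi_1\pfs\pi_2$, so your weight layer is silent and everything rests on the reverse-lex tiebreaker electing $x_{\pi_1}x_{\pi_2}$ over $x_{\pi_{0,1}}x_{\pi_{0,2}}$. You give only a heuristic ("redistribution of top mass") and explicitly defer the verification; since your concluding Macaulay/Hilbert-series step needs every non-standard quadratic monomial to actually be a leading term, the proof is incomplete without it. For what it is worth, the verification can be closed, and more easily through the \emph{first} canonical factor than the second: whenever $\pi_1\pi_2$ is non-standard with comparable supports one checks that $\pi_{0,1}$ is strictly $\leqtrlex$-smaller than both $\pi_1$ and $\pi_2$ (it carries all of the mass of $\pi_1\pfs\pi_2$ at the bottom of the common chain), so $x_{\pi_{0,1}}$ is the $\leqtrlex$-least variable occurring in either monomial and it occurs only in the canonical form; reverse-lex then makes $x_{\pi_1}x_{\pi_2}$ the larger monomial. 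That lemma is the missing content.

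The paper takes a route that avoids this head-to-head comparison altogether. It works only with the degree-plus-weight comparison $\cleq$ (not refined to a genuine total order) and, instead of a Hilbert-series count, argues by contradiction: if some $f\in I$ had a standard monomial $f_0$ as its strictly $\cleq$-least term, one repeatedly straightens the non-standard monomials among the strictly larger terms — each replacement produces standard monomials that are $\cleq$-greater or equal, so $f_0$ survives, and by the termination of the straightening algorithm (Remark \ref{remark_straighteningRelations}) one ends with a nonzero $\K$-linear combination of standard monomials lying in $I$, contradicting (LSA1). This only uses the weak inequality of (LSA2) plus linear independence of standard monomials, so the tie between a non-standard monomial and its canonical form never has to be broken. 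In short: your plan is viable and arguably yields a cleaner statement (a genuine quadratic initial ideal for an explicit total monomial order), but the one step you label "non-routine" is precisely the step the paper's argument is engineered to sidestep, and you must supply it for your proof to stand.
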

\begin{proof} Let $A$ be an LS algebra over the poset with bonds $(S,\leq,\bond)$, let $R = \K[x_\pi\,|\,\pi\in\LS_1]$ and let $I$ be the ideal of $R$ generated by
\[
\mathcal{R}_{\pi_1,\pi_2} = x_{\pi_1}x_{\pi_2}-\displaystyle\sum_j c_jx_{\pi_{j,1}}x_{\pi_{j,2}}
\]
for any non standard monomial $\pi_1\pi_2$ with straightening relation as in \textnormal{(LSA2)}
\[
\pi_1\pi_2=\sum_j c_j\pi_{j,1}\pi_{j,2}.
\]
By Remark \ref{remark_straighteningRelations}, $R/I$ is isomorphic to $A$.

Now fix a total order $\leqt$ refining the given order $\leq$ of $S$ and consider the associated reverse lexicographic order $\leqtrlex$ on LS-paths. Let $\cleq$ be the following order on monomials in $R$
\[
x_{\pi_1}x_{\pi_2}\cdots x_{\pi_r}\,\cleq\,x_{\pi_1'}x_{\pi_2'}\cdots x_{\pi'_{s}}
\]
if $r<s$ or $r=s$ and $\pi_1 \pfs \cdots \pfs \pi_r \leqtrlex \pi'_1 \pfs \cdots \pfs \pi'_r$. This is clearly a monomial order for $R$; moreover $\mathsf{in}(\mathcal{R}_{\pi_1,\pi_2})=x_{\pi_1}x_{\pi_2}$ by the order requirement in (LSA2). Hence the ideal $I'$ spanned by the initial terms of the $\mathcal{R}_{\pi_1,\pi_2}$'s is exactly the $\K$--vector space spanned by the non-standard monomials.

We want to prove that $\mathsf{in}(I)=I'$, i.e. that if $f\in I$ then its initial term $f_0$ cannot be a standard monomial. Suppose otherwise and let $f=f_0+f_1$ with $f_1$ a sum of strictly greater (w.r.t $\cleq$) monomials and $f_0$ standard. Then, using (LSA2), we replace any non-standard monomial $\mathsf{m}$ in $f_1$ by a sum of standard monomials $\mathsf{m}'$ with $\mathsf{m}\cleq\mathsf{m}'$; note that this can't delete $f_0$. By repeating this argument, in a finite number of steps we find a new element $f'\in I$ that is a linear combination of standard monomials. But this is impossible since in $A$ the standard monomials are linearly independent by (LSA1).

This finishes the proof that the $\mathcal{R}_{\pi_1,\pi_2}$'s, with $\pi_1\pi_2$ non-standard, are a Gr\"obner basis for $I$.
\end{proof}
The flat degeneration associated to the Gr\"obner basis in the previous proposition is more coarse than the degeneration to the discrete LS algebra stated in Theorem \ref{theorem_degeneration}. However it can be used to prove the following
\begin{theorem}
An LS algebra is Koszul.
\end{theorem}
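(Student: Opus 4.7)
The plan is to deduce Koszulness from the quadratic Gr\"obner basis provided by the previous proposition, using the standard fact that ``G\nobreakdash-quadratic implies Koszul''. More precisely, I would argue as follows.

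Let $A = R/I$ be the presentation of the LS algebra, with $R = \K[x_\pi\,|\,\pi\in\LS_1]$ and $I$ the ideal generated by the straightening relations $\mathcal{R}_{\pi_1,\pi_2}$. By the previous proposition, these relations form a Gr\"obner basis with respect to the monomial order $\cleq$, and by (LSA2) the initial term of each $\mathcal{R}_{\pi_1,\pi_2}$ is the quadratic monomial $x_{\pi_1}x_{\pi_2}$. Thus the initial ideal $\mathsf{in}(I)$ is a quadratic monomial ideal, and the quotient $R/\mathsf{in}(I)$ is the monomial algebra whose defining relations are exactly the non-standard degree-$2$ monomials.

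The first key step is that the monomial algebra $R/\mathsf{in}(I)$ is Koszul. This is a classical result: a quadratic monomial algebra (equivalently, the Stanley--Reisner-type quotient of a polynomial ring by squarefree quadratic monomials, or more generally by any quadratic monomials) is Koszul. One way to see this is that its defining ideal has a linear free resolution given by (a Taylor-type truncation of) the Koszul-style simplicial complex on the generators; alternatively one invokes Fr\"oberg's theorem.

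The second key step is to transfer Koszulness from $R/\mathsf{in}(I)$ back to $A = R/I$. The Gr\"obner degeneration produces a flat family of graded $\K$\nobreakdash-algebras with special fiber $R/\mathsf{in}(I)$ and generic fiber $A$; by upper semicontinuity of graded Betti numbers one has
\[
\dim_\K \Tor^{A}_i(\K,\K)_j \;\leq\; \dim_\K \Tor^{R/\mathsf{in}(I)}_i(\K,\K)_j
\]
for every $i,j$. Since the right-hand side vanishes for $j\neq i$, so does the left-hand side, which is exactly the Koszul condition for $A$.

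The argument is essentially formal once the quadratic Gr\"obner basis is in hand; the only step that requires genuine care is verifying the flat degeneration and the semicontinuity comparison in the graded setting, but both are standard and apply verbatim here because $A$ is finitely generated in degree $1$ and the order $\cleq$ respects the grading. Consequently no combinatorial property of $(S,\leq,\bond)$ beyond what is already encoded in (LSA1)--(LSA3) is needed, and the theorem follows.
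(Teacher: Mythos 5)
Your argument is correct and follows essentially the same route as the paper: both use the quadratic Gr\"obner basis from the preceding proposition, conclude that $R/\mathsf{in}(I)$ is Koszul by Fr\"oberg's theorem on quadratic monomial ideals, and then transfer Koszulness to $A$ along the flat Gr\"obner degeneration (the paper cites Kempf for this last step, which is the same semicontinuity-of-$\Tor$ argument you invoke).
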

\begin{proof} Let $A$ be an LS algebra over the poset with bonds $(S,\leq,\bond)$. By the previous proposition $A$ is the quotient of a polynomial ring $R$ by a homogeneous ideal $I$ admitting a quadratic Gr\"obner basis. In particular $A/\mathsf{in}(I)$ is Koszul by \cite{froberg} being $\mathsf{in}(I)$ a quadratic monomial ideal. But, by Theorem \ref{theorem_degeneration}, there exists a flat $\K[t]$--algebra $\mathcal{A}$ such that $\mathcal{A}/(t-a)$ is isomorphic to $A$ for all $a\in\K^*$ while $\mathcal{A}/(t)$ is isomorphic to $A/\mathsf{in}(I)$; then also $A$ is Koszul by \cite{kempf}.
\end{proof}

\end{document}